\newcommand{\nc}[1]{\newcommand{#1}}
\newtheorem{thm}{Theorem}[section]\nc{\bthm}{\begin{thm}} \nc{\ethm}{\end{thm}}
\newtheorem{prop}[thm]{Proposition}\nc{\bprp}{\begin{prop}} \nc{\eprp}{\end{prop}}
\newtheorem{fact}[thm]{Fact}\nc{\bfct}{\begin{fact}} \nc{\efct}{\end{fact}}
\newtheorem{prob}[thm]{Problem}\nc{\bprb}{\begin{prob}} \nc{\eprb}{\end{prob}}
\newtheorem{lem}[thm]{Lemma}\nc{\blem}{\begin{lem}} \nc{\elem}{\end{lem}}
\newtheorem{claim}[thm]{Claim}\nc{\bclm}{\begin{claim}} \nc{\eclm}{\end{claim}}
\newtheorem{cor}[thm]{Corollary}\nc{\bcor}{\begin{cor}} \nc{\ecor}{\end{cor}}
\newtheorem{conj}[thm]{Conjecture}\nc{\bcnj}{\begin{conj}} \nc{\ecnj}{\end{conj}}
\theoremstyle{definition}\newtheorem{defn}[thm]{Definition}\nc{\bdfn}{\begin{defn}} \nc{\edfn}{\end{defn}}
\theoremstyle{remark}
\newtheorem{rem}[thm]{Remark}\nc{\brem}{\begin{rem}}\nc{\erem}{\end{rem}}
\newtheorem{cnv}[thm]{Convention}\nc{\bcnv}{\begin{cnv}} \nc{\ecnv}{\end{cnv}}
\newtheorem{exam}[thm]{Example}\nc{\bexm}{\begin{exam}}\nc{\eexm}{\end{exam}}
\nc{\bpf}{\begin{proof}}\nc{\epf}{\end{proof}}\nc{\be}{\begin{enumerate}}\nc{\ee}{\end{enumerate}}
\nc{\bi}{\begin{itemize}}\nc{\itm}{\item}\nc{\ei}{\end{itemize}}
\nc{\ed}{
\subsection*{Credits}
All results in this paper were initially proved by its authors, who were not aware of the earlier works cited in the present version of this paper.
Initially, Theorem \ref{thm:Vec} was proved, chronologically earlier than the
other results, by the second and third named authors. The other results
were proved by the first and third named authors.
Our Theorem \ref{thm:Tor} may be new.

\end{document}
}
\begin{document}
\newcommand\chigen{\chi_{\mathrm{gen}}}
\newcommand\bbF{\mathbb{F}}
\newcommand\Un{\bigcup}
\newcommand\sm{\setminus}
\newcommand\sub{\subseteq}
\newcommand\bbZ{\mathbb{Z}}
\newcommand\N{\mathbb{N}}
\newcommand\bbH{\mathbb{H}}
\newcommand\Zp{\bbZ_{p}}
\newcommand\x{\times}
\newcommand\Zpp{\Zp\x\Zp}
\newcommand\PSL{\operatorname{PSL}}

\title{Monochromatic generating sets in groups and other algebraic structures}

\author{Noam Lifshitz, Itay Ravia, and Boaz Tsaban}

\address{Department of Mathematics, Bar-Ilan University, Ramat Gan 52900,
Israel}

\email{itayravia@gmail.com, lifshitz@gmail.com, tsaban@math.biu.ac.il}

\urladdr{http://math.biu.ac.il/\textasciitilde{}tsaban}

\begin{abstract}
The \emph{generating chromatic number} of
a group $G$, $\chigen(G)$, is the maximum number of colors $k$
such that there is a monochromatic generating set for each coloring
of the elements of $G$ in $k$ colors. If no such maximal $k$ exists,
we set $\chigen(G)=\infty$. 
Equivalently, $\chigen(G)$ is the maximal number $k$ such that there is no cover of $G$ by proper subgroups ($\infty$ if there is no such maximal $k$).

We provide characterizations, for arbitrary gruops,
in the cases $\chigen(G)=\infty$ and $\chigen(G)=2$. For nilpotent
groups (in particular, for abelian ones), all possible
chromatic numbers are characterized. Examples show that the characterization for nilpotent
groups do not generalize to arbitrary solvable groups. We conclude
with applications to vector spaces and fields.

\emph{Remark.} After completing this paper, we have learned that earlier works, using different terminology,
established most---if not all---of our results. 
We chose not to remove our proofs, but to provide references to all earlier proofs. 
Our Theorem \ref{thm:Tor} may be new, but it is not of great distance from known results, either.
\end{abstract}

\maketitle

\section{General properties and the case of infinite chromatic number}

Results in this section with no reference are folklore. 
All group theoretic background used in this paper can be found, for example, in \cite{DF}. We study the following Ramsey theoretic notion.

\bdfn
The \emph{generating chromatic number} of a group $G$, $\chigen(G)$,
is the maximum number of colors $k$ such that there is a monochromatic
generating set for each coloring of the elements of $G$ in $k$ colors.
If no such maximal $k$ exists, we set $\chigen(G)=\infty$.
\edfn
The following lemma gives a convenient reformulation of our definition.
\blem
\label{lem:sub-coloring-equiv} Let $G$ be a group. The following
are equivalent:
\begin{enumerate}
\item $\chigen(G)\ge k$.
\item For each cover $G=H_{1}\cup\cdots\cup H_{k}$ of $G$ by subgroups,
there is $i$ with $H_{i}=G$. 
\end{enumerate}
\elem
\begin{proof}
The first implication is obtained by giving elements of $H_{i}\sm(H_{1}\cup\cdots\cup H_{i-1})$
the color $i$, for each $i=1,\dots,k$.

For the second implication, replace each maximal monochromatic set
by the subgroup it generates.\end{proof}
\blem
\label{lem:ge2} Let $G$ be a group. 
\begin{enumerate}
\item $\chigen(G)\ge2$.
\item If $G$ is cyclic, then $\chigen(G)=\infty$.
\item If $G$ is finite noncyclic, then $\chigen(G)\le\left|G\right|-2$.
\end{enumerate}
\elem
\begin{proof}
(1) This is well known: If $G=H_{1}\cup H_{2}$ with $H_{1},H_{2}$
proper subgroups, then for $x\in G\sm H_{1}\sub H_{2}$ and $y\in G\sm H_{2}\sub H_{1}$,
we have that $xy\notin H_{1}\cup H_{2}$, a contradiction.

(2) A generator forms a monochromatic generating subset for each coloring.

(3) Give each element of $G\sm\left\{ e\right\} $ a unique color.
\end{proof}
Cyclic groups are not the only abelian groups with infinite generating
chromatic number. 
\bexm
$\chigen(\mathbb{Q},+)=\infty$.\eexm
\begin{proof}
This follows from the forthcoming Theorem \ref{thm:infchrome}; however,
we give a direct proof.

Assume that $\mathbb{Q}=H_{1}\cup\cdots\cup H_{k}$ with each $H_{i}$
a subgroup of $(\mathbb{Q},+)$. Then there is $i$ such that $H_{i}$
contains infinitely many rationals of the form $\frac{1}{n!}$. Then
$H_{i}=\mathbb{Q}$. Indeed, for each rational $\frac{l}{m}$, pick
$n\ge m$ with $\frac{1}{n!}\in H_{i}$. Then $\frac{n!}{m}$ is integer,
and therefore
\[
\frac{l}{m}=\frac{n!l}{m}\cdot\frac{1}{n!}\in H_{i}.
\]
\end{proof}

\blem[Cohn \cite{Cohn94}]\label{lem:quotient} 
If $H$ is a quotient (equivalently, a homomorphic image) of a group $G$, then $\chigen(G)\le\chigen(H)$.
\elem
\begin{proof}
Let $\varphi\colon G\to H$ be a surjective homomorphism. Give each
$g\in G$ the color of $\varphi(g)$. Then a monochromatic generating
subset of $G$ would map to a monochromatic generating subset of $H$.
\end{proof}

\bdfn
A cover $G=H_{1}\cup\cdots\cup H_{k}$ of a group $G$ by subgroups
is \emph{irredundant} if, for each $J\subsetneq\left\{ 1,\dots,k\right\} $,
$G\neq\Un_{i\in J}H_{i}$. \edfn
\blem
\label{lem:Irredundant} Let $G$ be a group and $k$ be a natural
number. Then:
\begin{enumerate}
\item If $\chigen(G)=k$, then there is an irredundant cover $G=H_{1}\cup\cdots\cup H_{k+1}$
of $G$ by subgroups.
\item If there is an irredundant cover $G=H_{1}\cup\cdots\cup H_{k+1}$
of $G$ by subgroups, then $\chigen(G)\le k$. \qed
\end{enumerate}
\elem

The following theorem is proved in \cite{HaberRosenfeld59} for finite groups, but the proof
actually treats the case of torsion groups.

\begin{prop}[Haber--Rosenfeld \cite{HaberRosenfeld59}]\label{prop:pLowerBound} 
Let $G$ be a finite (or just torsion) group,
and $p$ be the minimal order of an element of $G$. Then $\chigen(G)\ge p$.\end{prop}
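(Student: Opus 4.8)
The plan is to invoke Lemma \ref{lem:sub-coloring-equiv}, which reduces the claim $\chigen(G)\ge p$ to the statement that $G$ is not a union of $p$ proper subgroups; since a cover by fewer than $p$ proper subgroups can be padded by repetition to a cover by exactly $p$, this is the same as showing that $G$ is not a union of \emph{at most} $p$ proper subgroups. I will prove this by induction on $|G|$ in the finite case, and reduce the torsion case to it. First observe that $p$ is prime: an element of composite order $ab$ with $1<a<ab$ has a power of order $a<p$, contradicting minimality. Fix an element $x$ of order $p$.

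The engine is a counting argument on the left cosets of $\langle x\rangle$, each of which has exactly $p$ elements. Suppose toward a contradiction that $G=\Un_{i=1}^{n}H_i$ with the $H_i$ proper and $n\le p$; thinning the cover, I may assume it is irredundant. For a coset $C=g\langle x\rangle$ and an index $i$ there are two possibilities. If $x\notin H_i$, then $H_i$ meets $C$ in at most one point, since $gx^a,gx^b\in H_i$ would give $x^{a-b}\in H_i$, and as $p$ is prime this forces $x\in H_i$. If $x\in H_i$, then $\langle x\rangle\sub H_i$ and $H_i$ contains either all of $C$ or none of it. Write $S=\{i:x\in H_i\}$ and $T=\{i:x\notin H_i\}$.

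Two cases are immediate. If $S=\emptyset$, then each $H_i$ meets each coset in at most one point, so covering the $p$ points of a single coset forces $n=p$ with each $H_i$ meeting each coset exactly once; applied to the coset $\langle x\rangle$, whose intersection with each $H_i$ then reduces to $\{e\}$, this yields $x\notin\Un_i H_i=G$, a contradiction. If $S$ and $T$ are both nonempty, then $|T|\le n-1<p$, so the subgroups in $T$ cover fewer than $p$ points of any coset $C$; the remaining point of $C$ lies in some $H_i$ with $i\in S$, whence $C\sub H_i$. Thus every coset lies in some $S$-subgroup, so $\Un_{i\in S}H_i=G$ is a proper subcover, contradicting irredundancy.

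The remaining and main case is $T=\emptyset$, i.e.\ $x\in H_i$ for all $i$. Running the same analysis for an arbitrary element of order $p$ shows that, in this surviving case, every element of order $p$ lies in $K=\bigcap_i H_i$; in particular all conjugates of $x$ do, so the normal closure $N$ of $\langle x\rangle$ (generated by the conjugates of $x$, each of order $p$) is a nontrivial normal subgroup with $N\sub K\subsetneq G$. Then $G/N=\Un_i H_i/N$ is a cover by at most $p$ proper subgroups, and the minimal order of an element of $G/N$ is again $\ge p$, since every prime dividing $|G/N|$ occurs as the order of an element of $G$ and is hence $\ge p$. In the finite case $|G/N|<|G|$, so the induction hypothesis applies and yields the contradiction. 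I expect this last case to be the main obstacle, together with making the descent well founded for infinite torsion groups, where induction on $|G|$ is unavailable: there one first passes to a finite quotient, using that the members of a finite irredundant cover have finite index (Neumann \cite{Neumann}), so that the normal core of $K$ has finite index and quotienting by it returns us to the finite case while preserving the number of subgroups and the bound $p$ on the minimal element order. The only routine bookkeeping is to check that these reductions keep the cover proper and irredundant and that the minimal element order never drops below $p$.
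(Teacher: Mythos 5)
Your proof is correct, but it takes a genuinely different route from the paper's. The paper's argument is a single pigeonhole step with no induction: assuming $k:=\chigen(G)<\infty$, it takes an irredundant cover $G=H_1\cup\cdots\cup H_{k+1}$ and picks $g_1\in H_1\sm(H_2\cup\cdots\cup H_{k+1})$ and $g_2\in H_2\sm(H_1\cup H_3\cup\cdots\cup H_{k+1})$; the $p$ elements $g_1^ng_2$, $0\le n<p$, all avoid $H_1$, so if $p>k$ two of them lie in a common $H_i$ with $i\ge 2$, whence $g_1^{n-m}\in H_i$ and, by minimality of $p$ (the same divisibility computation you use), $g_1\in H_i$, contradicting the choice of $g_1$. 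The crucial difference is that the paper exploits irredundancy to place its base point $g_1$ inside exactly one member of the cover, which makes your hard case ($x$ contained in every $H_i$) impossible from the outset; you instead fix an element $x$ of order $p$ independently of the cover, and must then dispose of that case with heavier machinery: the observation that every order-$p$ element lies in $\bigcap_i H_i$, passage to the quotient by the normal closure of $\langle x\rangle$, strong induction on $|G|$, Cauchy's theorem to control element orders in the quotient, and Neumann plus Poincar\'e to reduce the torsion case to the finite one. Both proofs share the same kernel (a subgroup not containing $x$ meets each coset of $\langle x\rangle$ in at most one point), but the paper's choice of elements eliminates the induction and the appeal to Neumann's theorem entirely, and its argument applies verbatim to torsion groups; your descent is longer, yet it is sound, works with covers by at most $p$ proper subgroups directly, and extracts some extra structural information along the way.
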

\begin{proof}
Assume that $k:=\chigen(G)<\infty$. By Lemma \ref{lem:ge2}, $k\ge2$. 

Let $G=H_{1}\cup\cdots\cup H_{k+1}$ be an irredundant cover by subgroups.
Pick 
\[
g_{1}\in H_{1}\sm\left(H_{2}\cup\cdots\cup H_{k+1}\right),g_{2}\in H_{2}\sm\left(H_{1}\cup H_{3}\cup\cdots\cup H_{k+1}\right).
\]
For each $0\le n<p$, as $g_{1}^{n}\in H_{1}$ and $g_{2}\notin H_{1}$,
we have that$g_{1}^{n}g_{2}\notin H_{1}$. 

Assume that $p>k$. Then, by the pigeonhole principle, there are $0\le m<n<p$
and $2\le i\le k$ such that $g_{1}^{m}g_{2},g_{1}^{n}g_{2}\in H_{i}$.
Thus, $g_{1}^{n-m}\in H_{i}$. As the minimal order of an element
of $G$ is $p$ and $1\le n-m<p$, $g_{1}$ is a power of $g_{1}^{n-m}$,
and thus $g_{1}\in H_{i}$, a contradiction.\end{proof}
\begin{prop}[{Cohn \cite{Cohn94}}]\label{prop:CxG} 
Let $A,B$ be finite (or just torsion) groups such
that the order of each element of $A$ is coprime to the order of
each element of $B$. Then $\chigen(A\x B)=\min\left\{ \chigen(A),\chigen(B)\right\} $.\end{prop}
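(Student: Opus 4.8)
The plan is to prove the two inequalities separately. The upper bound $\chigen(A\x B)\le\min\{\chigen(A),\chigen(B)\}$ is immediate from Lemma \ref{lem:quotient}: the two coordinate projections $A\x B\to A$ and $A\x B\to B$ are surjective homomorphisms, so $\chigen(A\x B)\le\chigen(A)$ and $\chigen(A\x B)\le\chigen(B)$, and hence $\chigen(A\x B)\le\min\{\chigen(A),\chigen(B)\}$.

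The real content is the reverse inequality, and it rests on a structural observation that I expect to be the main obstacle: under the coprimality hypothesis, every subgroup $K\le A\x B$ is a \emph{box}, i.e., $K=K_A\x K_B$ with $K_A\le A$ and $K_B\le B$. To see this, take $(a,b)\in K$. Since $A$ and $B$ are torsion and the orders of $a$ and $b$ are coprime, raising $(a,b)$ to the order of $b$ kills the second coordinate and leaves a power of $a$ in the first; coprimality then lets me recover $(a,e)$ as a further power, and symmetrically $(e,b)$. Thus both $(a,e)$ and $(e,b)$ lie in $K$, which gives the claimed product decomposition. I expect this to be the delicate point, as coprimality of element orders is used in an essential way (it fails badly, for instance, for the diagonal subgroup of $\Zp\x\Zp$).

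With the box lemma in hand I would argue the lower bound through the cover reformulation of Lemma \ref{lem:sub-coloring-equiv}, using its consequence that $\chigen(G)\ge k$ forces $G$ not to be a union of at most $k$ proper subgroups (pad any shorter cover with repeats to length $k$). Write $k=\min\{\chigen(A),\chigen(B)\}$, treating the case $k=\infty$ by running the following argument for every finite value, and let $A\x B=K_1\cup\cdots\cup K_k$ with $K_i=A_i\x B_i$. Assume toward a contradiction that no $K_i$ equals $A\x B$, so that for each $i$ either $A_i\subsetneq A$ or $B_i\subsetneq B$; set $I_A=\{i:A_i\subsetneq A\}$ and $I_B=\{i:B_i\subsetneq B\}$, so that $I_A\cup I_B=\{1,\dots,k\}$. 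Since $\{A_i:i\in I_A\}$ is a family of at most $k$ proper subgroups of $A$ and $\chigen(A)\ge k$, it does not cover $A$; choose $a_0\in A\sm\Un_{i\in I_A}A_i$.

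Now every index $i$ with $a_0\in A_i$ lies outside $I_A$, hence in $I_B$. Covering the fiber $\{a_0\}\x B$ forces, for each $b\in B$, some $K_i=A_i\x B_i$ with $a_0\in A_i$ and $b\in B_i$; thus $B=\Un\{B_i:a_0\in A_i\}\sub\Un_{i\in I_B}B_i$. This exhibits $B$ as a union of at most $k$ proper subgroups, contradicting $\chigen(B)\ge k$. Hence some $K_i$ equals $A\x B$, giving $\chigen(A\x B)\ge k$ and completing the scheme. Everything after the box lemma is a pigeonhole on proper-subgroup covers, drawing $a_0$ from the $A$-side and reading off a forbidden cover on the $B$-side.
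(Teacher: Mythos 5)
Your proof is correct, but it takes a genuinely different route from the paper's. The upper bound via Lemma \ref{lem:quotient} is identical in both. For the lower bound, the paper argues directly with colorings: given a $k$-coloring $c$ of $A\x B$, it first finds in each fiber $\{a\}\x B$ a monochromatic generating set $M_a$ of $B$ of some color $i_a$, then colors $A$ by $f(a)=i_a$, extracts a monochromatic generating set $M$ of $A$ of some color $i$, and shows that the set $I$ of all color-$i$ elements of $A\x B$ generates the whole group; coprimality of element orders enters only at this last step, to split each $(a,b)$ with $a\in M$, $b\in M_a$ into $(a,e),(e,b)\in\left\langle I\right\rangle$. You instead pass to the cover reformulation (Lemma \ref{lem:sub-coloring-equiv}) and isolate a structural fact the paper never states: under the coprimality hypothesis, every subgroup of $A\x B$ is a box $K_A\x K_B$. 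Your splitting argument for this is exactly the same coprimality trick, applied to an arbitrary subgroup rather than to $\left\langle I\right\rangle$. With boxes in hand, your pigeonhole---pick $a_0$ avoiding the proper $A$-sides, then read off from the fiber $\{a_0\}\x B$ a cover of $B$ by at most $k$ proper subgroups---is clean and correct, including the degenerate case $I_A=\varnothing$ and the reduction of $k=\infty$ to all finite values. What your route buys is a reusable structural lemma and a transparent combinatorial core; what the paper's route buys is that it never needs the box decomposition in full generality and constructs the monochromatic generating set explicitly rather than arguing by contradiction. Both proofs ultimately hinge on the identical use of coprimality.
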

\begin{proof}
$(\le)$ By Lemma \ref{lem:quotient}.

$(\ge)$ Let $k\le\min\left\{ \chigen(A),\chigen(B)\right\} $, and
let $c\colon A\x B\to\left\{ 1,\dots,k\right\} $ be a coloring of
the elements of $A\x B$ in the colors $\left\{ 1,\dots,k\right\} $.
For each $a\in A$, define $c_{a}\colon B\to\left\{ 1,\dots,k\right\} $
by 
\[
c_{a}(b)=c(a,b).
\]
As $k\le\chigen(B)$, there is a monochromatic generating set $M_{a}$
of $B$, of some color $i_{a}$. Now, define a coloring $f\colon A\to\left\{ 1,\dots,k\right\} $
by 
\[
f(a)=i_{a}.
\]
Let $M\sub A$ be a monochromatic generating set, say of color $i$.
Let 
\[
I=\left\{ (a,b)\in A\x B\,:\, c(a,b)=i\right\} .
\]
We claim that $I$ generates $A\x B$. Indeed, fix $a\in M$ and $b\in M_{a}$.
Then $(a,b)\in I$. Let $m$ be the order of $b$. As $m$ is coprime
to the order of $a$, $(a,e)$ is a power of$(a^{m},e)=(a,b)^{m}$,
and thus belongs to $\left\langle I\right\rangle $. Similarly, $(e,b)\in\left\langle I\right\rangle $.
It follows that $M\x\left\{ e\right\} ,\left\{ e\right\} \x M_{a}\sub\left\langle I\right\rangle $,
and therefore $A\x\left\{ e\right\} ,\left\{ e\right\} \x B\sub\left\langle I\right\rangle $.
\end{proof}
The following result, proved in \cite{Neumann}(4.4), will make it
possible for us to reduce the case of arbitrary groups into the case
of finite ones.
\bthm[Neumann]\label{thm:Neumann}
For each irredundant cover $G=H_{1}\cup\cdots\cup H_{k}$ of a group
$G$ by subgroups, each of the subgroups $H_{1},\dots,H_{k}$ has
finite index in $G$.\ethm
\begin{cor}\label{cor:redFin} 
Let $G$ be a group with $\chigen(G)=k<\infty$.
Let $G=H_{1}\cup\cdots\cup H_{k+1}$ be an irredundant cover of $G$
by subgroups. There is a normal subgroup $N$ of $G$ such that:
\begin{enumerate}
\item $N$ has finite index in $G$;
\item $N\sub H_{1}\cap\cdots\cap H_{k}$;
\item $G/N=H_{1}/N\cup\cdots\cup H_{k}/N$ is an irredundant cover of the
finite group $G/N$; and
\item $\chigen(G/N)=k$.
\end{enumerate}
\end{cor}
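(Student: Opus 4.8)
The plan is to combine Neumann's theorem with the standard fact that a finite-index subgroup contains a finite-index \emph{normal} subgroup, and then to transfer the cover (and its irredundancy) through a quotient map. First I would invoke Theorem \ref{thm:Neumann}: since $G=H_{1}\cup\cdots\cup H_{k+1}$ is irredundant, each $H_{i}$ has finite index in $G$. As an intersection of finitely many finite-index subgroups again has finite index, the subgroup $H:=H_{1}\cap\cdots\cap H_{k+1}$ has finite index in $G$.

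This $H$ need not be normal, which is the one genuine obstacle. To fix it, I would pass to the normal core $N:=\bigcap_{g\in G}gHg^{-1}$. Equivalently, $N$ is the kernel of the left-multiplication action $G\to\operatorname{Sym}(G/H)$; since $G/H$ is finite, so is $\operatorname{Sym}(G/H)$, whence $G/N$ embeds in a finite group and $N$ is a normal subgroup of finite index. This gives (1). Because $N\sub H\sub H_{i}$ for every $i$, we get (2) (indeed $N$ lies in the intersection of all $k+1$ subgroups, so a fortiori in $H_{1}\cap\cdots\cap H_{k}$).

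For (3), note first that $G/N$ is finite by (1). As $N\sub H_{i}$, each quotient $H_{i}/N$ is a subgroup of $G/N$, and since every $g\in G$ lies in some $H_{i}$, every coset $gN$ lies in some $H_{i}/N$; hence $G/N=H_{1}/N\cup\cdots\cup H_{k+1}/N$ is a cover. Irredundancy transfers by taking preimages under $\pi\colon G\to G/N$: if $G/N=\Un_{i\in J}H_{i}/N$ for some $J\subsetneq\{1,\dots,k+1\}$, then applying $\pi^{-1}$ and using $\pi^{-1}(H_{i}/N)=H_{i}$ (valid since $N\sub H_{i}$) yields $G=\Un_{i\in J}H_{i}$, contradicting the irredundancy of the original cover.

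Finally, for (4): the irredundant cover of $G/N$ by $k+1$ subgroups gives $\chigen(G/N)\le k$ by Lemma \ref{lem:Irredundant}(2), while $G/N$ being a quotient of $G$ gives $k=\chigen(G)\le\chigen(G/N)$ by Lemma \ref{lem:quotient}; together these force $\chigen(G/N)=k$. The only step demanding more than bookkeeping is the passage to the normal core and the check that it retains finite index; everything else is a routine pushing of the cover through the quotient map.
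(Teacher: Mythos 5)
Your proposal is correct and follows essentially the same route as the paper: Neumann's theorem to get finite index for each $H_i$, then passage to a finite-index \emph{normal} subgroup contained in the intersection (the paper cites this as Poincar\'e's theorem, where you spell out the standard normal-core proof), then transferring the cover and its irredundancy to $G/N$, and finally sandwiching $\chigen(G/N)$ between $k$ and $k$ via Lemmata \ref{lem:quotient} and \ref{lem:Irredundant}. Your preimage argument for irredundancy is just the contrapositive of the paper's direct element-chasing argument, so there is no substantive difference.
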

\begin{proof}
(1,2) By Neumann's Theorem \ref{thm:Neumann}, the subgroup $H:=H_{1}\cap\cdots\cap H_{k+1}$,
being an intersection of finite index subgroups of $G$, has finite
index in $G$. By Poincar\'e's Theorem, there is a subgroup $N$ of
$H$ such that $N$ is a finite index, normal subgroup of $G$. 

(3) For $J\subsetneq\left\{ 1,\dots,k+1\right\} $, Let $g\in G\sm\Un_{i\in J}H_{i}$.
If $gN\in\Un_{i\in J}H_{i}/N$, say $gN\in H_{i}/N$, then $g\in h_{i}N\sub H_{i}$,
a contradiction.

(4) By Lemma \ref{lem:quotient}, Lemma \ref{lem:Irredundant} and
(3), $k=\chigen(G)\le\chigen(G/N)\le k$.
\end{proof}
We already obtain a characterization of the case where $\chigen(G)=\infty$. 
\bthm
\label{thm:infchrome} Let $G$ be a group. The following assertions
are equivalent:
\begin{enumerate}
\item $\chigen(G)=\infty$.
\item Every finite quotient of $G$ is cyclic.
\end{enumerate}
\ethm
\begin{proof}
$(1\Rightarrow2)$ Let $G/N$ be a finite quotient of $G$. By Lemma
\ref{lem:quotient}, $\chigen(G/N)=\infty$. By Lemma \ref{lem:ge2},
$G$ is cyclic.

$(2\Rightarrow1)$ Assume that $\chigen(G)<\infty$. Then, by Corollary
\ref{cor:redFin}, some finite quotient of $G$ has the same finite
generating chromatic number, and is thus not cyclic.
\end{proof}
It follows, for example, that infinite simple groups (e.g., $A_{\infty}$
or $\PSL_{n}(\bbF)$, $n\ge3$) have infinite generating chromatic
number.

\section{Nilpotent groups of finite chromatic number}

\blem[Cohn \cite{Cohn94}]
\label{lem:Zpp} $\chigen(\Zpp)=p$.\elem
\begin{proof}
$(\ge)$ Proposition \ref{prop:pLowerBound}.

$(\le)$ $\Zpp$ is the union of $p+1$ projective lines, the cyclic
subgroups generated by the $p+1$ elements $(0,1)$ and $(1,0),(1,1),\dots,(1,p-1)$. 
\end{proof}

Say that a group $G$ is \emph{projectively nilpotent} if every finite
quotient of $G$ is nilpotent. In particular, every nilpotent, quasi-nilpotent,
or pro-nilpotent group is projectively nilpotent. 
In the finite case, the following theorem was proved by Cohn \cite{Cohn94}.
The infinite case follows from Corollary \ref{cor:redFin}, which in turns follows
from Neumann's Theorem \ref{thm:Neumann}.

\bthm[Cohn, Neumann]
\label{thm:Nil} Let $G$ be a projectively nilpotent group with $\chigen(G)<\infty$.
Then $\chigen(G)$ is the minimum prime number $p$ such that $\Zpp$
is a quotient of $G$ (and there is such $p$).\ethm
\begin{proof}
Let $G$ be a projectively nilpotent group with $\chigen(G)<\infty$.
By Corollary \ref{cor:redFin}, we may assume that $G$ is a finite
nilpotent group. Write 
\[
G=P_{1}\x\cdots\x P_{n},
\]
where each $P_{i}$ is the $p_{i}$-Sylow subgroup of $G$. By Lemma
\ref{prop:CxG}, 
\[
\chigen(G)=\min\left\{ \chigen(P_{1}),\dots,\chigen(P_{n})\right\} .
\]
Let $P=P_{i}$ be with $\chigen(G)=\chigen(P_{i})$, and $p=p_{i}$. 

By Lemma \ref{prop:pLowerBound}, $\chigen(P)\ge p$. As $P$ is a
noncyclic, its rank $r$ is greater than $1$. As $P$ is a $p$-group,
$\Zp^{r}$ is a quotient of $P$, and in particular of $G$. 

Thus, $\chigen(G)=\chigen(P)=p$ and $\Zpp$ is a quotient of $G$.
By Lemmata \ref{lem:quotient} and \ref{lem:Zpp}, there is no smaller
prime $q$ such that $\bbZ_{q}\x\bbZ_{q}$ is a quotient of $G$.
\end{proof}

\section{General groups}

The forthcoming Theorem \label{thm:Tor} may be new.
Responding to a question of us,
Martino Garonzi came up with two alternative proofs of Theorem \label{thm:Tor}. One of these
proofs uses a powerful result of Tomkinson \cite[Lemma 3.2]{Tomkinson97}.\footnote{Garonzi 
also points a misprint there: It is necessary to assume there that the union of the 
$U_i$'s is different from $G$.}
We are greatful to Martino Garonzi for this information.

We will show, in Section \ref{sec:examples}, that Theorem \ref{thm:Nil}
does not generalize to arbitrary (or even finite solvable) groups.
In the present section, we provide a partial generalization of this
theorem to arbitrary torsion groups. Recall, from Lemma \ref{prop:pLowerBound},
that the premise in the following theorem implies that $\chigen(G)\ge p$.
\bthm\label{thm:Tor} 
Let $G$ be a nontrivial torsion group, and let $p$
be the minimal order of a nonidentity element of $G$. The following
assertions are equivalent:
\begin{enumerate}
\item $\chigen(G)=p$.
\item $\Zpp$ is a quotient of $G$.
\end{enumerate}
\ethm
\begin{proof}
$(2)\Rightarrow(1)$: Proposition \ref{prop:pLowerBound}, Lemma \ref{lem:quotient}
and Lemma \ref{lem:Zpp}.

$(1)\Rightarrow(2)$: Assume that $G/N$ is a quotient of $G$. The
order of each element $gN\in G/N$ is $\ge p$. Indeed, the order
of $gN$ divides the order of $g$, and is therefore the order of
a power of $g$, which in turn is $\ge p$. Thus, let 
\[
G=H_{1}\cup\cdots\cup H_{p+1}
\]
be an irredundant cover by proper subgroups. Then, by Corollary \ref{cor:redFin},
we know $G$ has a finite quotient $G/N$ with $\chigen\left(G/N\right)=\chigen\left(G\right)$. 
Since each element of $G/N$ is of order $\geq p$ we may assume
$G$ is finite. We now have by Cauchy's theorem that $\left|G\right|$
and $i$ are relatively prime for each $i<p$. 

Let $H$ be a subgroup of $G$. Then $\left[G:H\right]$ divides $\left|G\right|$,
and by Cauchy's theorem, there is $g\in G$ whose order divides $\left[G:H\right]$.
In particular, $\left[G:H\right]\ge p$ and $\left|H\right|\le\left|G\right|/p$.
Assume that, for each $i=1,\dots,p+1$, $\left[G:H_{i}\right]\ge p+1$.
Then, as $e\in H_{1}\cap\cdots\cap H_{p+1}$,
\[
\left|G\right|=\left|H_{1}\cup\cdots\cup H_{p+1}\right|<\left|H_{1}\right|+\cdots+\left|H_{p+1}\right|\le(p+1)\cdot\frac{\left|G\right|}{p+1}=\left|G\right|,
\]
a contradiction. Thus, we may assume that $\left[G:H_{1}\right]=p$.
As $p$ is the minimal prime divisor of $\left|G\right|$, $H_{1}$
is a \emph{normal} subgroup of $G$.

Fix $i\in\left\{ 2,\dots,p+1\right\} $. Then $p=\left[G:H_{1}\right]=\left[G:H_{i}H_{1}\right]\cdot\left[H_{i}H_{1}:H_{1}\right]$.
As $p$ is prime and $H_{i}\neq H_{1}$ we have by the Second Isomorphism
Theorem that $p=\left[H_{i}H_{1}:H_{1}\right]=\left[H_{i}:H_{i}\cap H_{1}\right]$.
It follows that 
\[
\left|H_{i}\sm H_{1}\right|=\left|H_{i}\sm(H_{i}\cap H_{1})\right|=\frac{p-1}{p}\cdot\left|H_{i}\right|.
\]
Thus,
\begin{eqnarray*}
\frac{p-1}{p}\cdot\left|G\right| & = & \left|G\sm H_{1}\right|=\left|(H_{2}\sm H_{1})\cup(H_{3}\sm H_{1})\cup\cdots\cup(H_{p+1}\sm H_{1})\right|\leq\\
 & \leq & \left|H_{2}\sm H_{1}\right|+\left|H_{3}\sm H_{1}\right|+\cdots+\left|H_{p+1}\sm H_{1}\right|=\\
 & = & \frac{p-1}{p}(\left|H_{2}\right|+\left|H_{3}\right|+\cdots+\left|H_{p+1}\right|).
\end{eqnarray*}
As $\left|H_{i}\right|\le\left|G\right|/p$ for each $i$,
\[
\left|G\right|\leq\left|H_{2}\right|+\cdots+\left|H_{p+1}\right|\leq\frac{\left|G\right|}{p}+\cdots+\frac{\left|G\right|}{p}=\left|G\right|.
\]
Thus $\left|H_{2}\right|=\left|H_{3}\right|=\cdots=\left|H_{p+1}\right|=\left|G\right|/p$, 
and therefore $H_{2},\dots,H_{p+1}$ are also normal subgroups of $G$. 

Let $N=H_{1}\cap H_{2}\cap\cdots\cap H_{p+1}$. By moving to $G/N=H_{1}/N\cup\cdots\cup H_{p+1}/N$
if needed, we may assume that $H_{1}\cap\cdots\cap H_{p+1}=\left\{ e\right\} $. 

We claim that all elements of $G$ are of order $p$.

Let $g\in G$ be with $g^{p}\neq e$. We may assume that $g\in H_{1}$.
Then $g^{p}\in H_{1}$. As $H_{1}\cap\cdots\cap H_{p+1}=\left\{ e\right\} $,
we may assume that $g^{p}\notin H_{2}$. Let $h\in H_{2}\sm(H_{1}\cup H_{3}\cup\cdots\cup H_{p+1})$.
At least two of the $p+2$ elements 
\[
g,h,hg,hg^{2},\dots,hg^{p}
\]
are in the same $H_{i}$. 

If $i=1$, then $g,hg^{j}\in H_{1}$ for some $j$ and therefore $h\in H_{1}$,
a contradiction. 

If $i=2$, then similarly $g^{j}\in H_{2}$ for some $j\in\left\{ 0,\dots,p\right\} $.
As $g^{p}\notin H_{2}$, $j<p$. By Cauchy's theorem, $p$ is the
minimal prime factor of $\left|G\right|$. Thus, $j$ is coprime to
$\left|G\right|$, and in particular to the order of $g$. Thus, $g\in\left\langle g^{j}\right\rangle \sub H_{2}$,
and therefore $g^{p}\in H_{2}$, a contradiction. 

Thus, $i>2$. We may assume that $i=3$. If $g,hg^{j}\in H_{3}$ for
some $j$, then $h\in H_{3}$, a contradiction. Thus, $hg^{j},hg^{k}\in H_{3}$
for some $0\le j<k\le p$. Then $g^{k-j}\in H_{3}$. As $k-j<p$,
$g\in H_{3}$ and thus also $h\in H_{3}$, a contradiction.

Thus, all elements of $G$ are of order $p$, and in particular $G$
is a $p$-group. As $\chigen(G)=p<\infty$, $G$ is not cyclic, and
therefore $\Zpp$ is a quotient of $G$.
\end{proof}

In the case of $2$ colors, we obtain a completely general characterization.

\bthm[Scorza \cite{Scorza26}]
\label{thm:twocolors} Let $G$ be a group. The following assertions
are equivalent:
\begin{enumerate}
\item $\chigen(G)=2$.
\item $\bbZ_{2}\x\bbZ_{2}$ is a quotient of $G$.
\end{enumerate}
\ethm
\begin{proof}
$(2\Rightarrow1)$ Lemmata \ref{lem:quotient}, \ref{lem:Zpp}, and
\ref{lem:ge2}.

$(1\Rightarrow2)$ By Corolllary\ref{cor:redFin}, we may assume that
$G$ is a nontrivial finite group. By Proposition \ref{prop:pLowerBound},
the minimal order of a nonidentity element of $G$ is $2$. Apply
Theorem \ref{thm:Tor}.
\end{proof}

We will show, in Example \ref{exm:Z3Z} below, that a $3$-colors
version of Theorem \ref{thm:twocolors} is not true in general.

\section{examples\label{sec:examples}}

All results in this section can, alternatively, be derived from Tomkinson's treatment of the general finite solvable case \cite{Tomkinson97}.

One may wonder whether Theorem \ref{thm:Nil} holds for arbitrary
groups $G$, or at least for arbitrary solvable groups. We will show
that the answer is negative, in a strong sense.

Recall that the \emph{Frattini subgroup} of a group $G$, $\phi\left(G\right)$,
is defined to be the intersection of all maximal subgroups of $G$,
or $G$ if $G$ has no maximal subgroups. Recall that the elements
of $\phi(G)$ are all the non-generators of $G$, that is, all elements
$g\in G$ such that $G=\left\langle g,H\right\rangle $ implies $H=G$. 
\blem
\label{lem:frattini} Let $H\trianglelefteq G$ be a subgroup of the
Frattini subgroup $\phi\left(G\right)$. Then $\chigen\left(G\right)=\chigen\left(G/H\right)$.\elem
\begin{proof}
$\left(\leq\right)$ By Lemma \ref{lem:quotient}.

$\left(\ge\right)$ Let $k=\chigen\left(G\right)$, and let $G=H_{1}\cup H_{2}\cup\cdots\cup H_{k+1}$
be an irredundant cover of $G$ by subgroups. As each $H_{i}$ is
of finite index, we can replace each $H_{i}$ by a group of minimal
index containing $H_{i}$. Thus, for each $i$, we may asume that
$H_{i}$ is maximal, and therefore $H\leq H_{i}$. As 
\[
G/H=H_{1}/H\cup H_{2}/H\cup\cdots\cup H_{k+1}/H,
\]
we have that $\chigen\left(G/H\right)\le k$.\end{proof}
\blem
$\chigen\left(\bbZ_{p}\rtimes_{\varphi}\mathbb{Z}_{n}\right)=p$ if
$\varphi$ is nontrivial. In general, $\chigen\left(\bbZ_{p}\rtimes_{\varphi}\mathbb{Z}_{n}\right)$
is either $p$ or $\infty$.
\elem
\begin{proof}
Write $\bbZ_{p}\rtimes_{\varphi}\mathbb{Z}_{n}=\left\langle x\right\rangle \rtimes\left\langle y\right\rangle $.
$\left\langle x\right\rangle $ acts on the subgroups of $\bbZ_{p}\rtimes_{\varphi}\mathbb{Z}_{n}$
by conjugation. We claim that $\left|\left[\left\langle y\right\rangle \right]\right|=p$.
Since $\left\langle y\right\rangle $ is maximal non-normal, $N_{\bbZ_{p}\rtimes_{\varphi}\mathbb{Z}_{n}}\left(\left\langle y\right\rangle \right)=\left\langle y\right\rangle $
and therefore $N_{\left\langle x\right\rangle }\left(\left\langle y\right\rangle \right)=\left\langle y\right\rangle \cap\left\langle x\right\rangle =\left\{ 1\right\} $.
Thus 
\[
\left|\left[\left\langle y\right\rangle \right]\right|=\left[\left\langle x\right\rangle :N_{\left\langle x\right\rangle }\left(\left\langle y\right\rangle \right)\right]=\left[\left\langle x\right\rangle :\left\{ 1\right\} \right]=p.
\]
Since $\left(\left\langle y\right\rangle \cap\left\langle y^{x}\right\rangle \right)^{x}$
is a subgroup of the cyclic group $\left\langle y^{x}\right\rangle $,
with the same order as $\left\langle y\right\rangle \cap\left\langle y^{x}\right\rangle $,
we have that $\left\langle y\right\rangle \cap\left\langle y^{x}\right\rangle =\left(\left\langle y\right\rangle \cap\left\langle y^{x}\right\rangle \right)^{x}$.
Thus, $x,y\in N_{\bbZ_{p}\rtimes_{\varphi}\mathbb{Z}_{n}}\left(\left\langle y\right\rangle \cap\left\langle y^{x}\right\rangle \right)$,
and therefore $\left\langle y\right\rangle \cap\left\langle y^{x}\right\rangle \trianglelefteq\bbZ_{p}\rtimes_{\varphi}\mathbb{Z}_{n}$.
Let $m=\left|\left\langle y\right\rangle \cap\left\langle y^{x}\right\rangle \right|$.
Then $\left\langle y\right\rangle \cap\left\langle y^{x}\right\rangle =\left\langle y^{\frac{n}{m}}\right\rangle $.
Since $\left[x,y^{\frac{n}{m}}\right]\in\left\langle y^{\frac{n}{m}}\right\rangle \cap\left\langle x\right\rangle =1$,
$y^{\frac{n}{m}}$ commutes with both $x$ and $y$, and therefore
$y^{\frac{n}{m}}\in Z\left(\bbZ_{p}\rtimes_{\varphi}\mathbb{Z}_{n}\right)$.
Let $k=\chigen\left(\bbZ_{p}\rtimes_{\varphi}\mathbb{Z}_{n}\right)$,
and let 
\[
\bbZ_{p}\rtimes_{\varphi}\mathbb{Z}_{n}=H_{1}\cup H_{2}\cup\cdots\cup H_{k+1}
\]
be an irredundant cover of $\bbZ_{p}\rtimes_{\varphi}\mathbb{Z}_{n}$
by subgroups. Since each element $A\in\left[\left\langle y\right\rangle \right]$
is cyclic and maximal, 
\[
\left[\left\langle y\right\rangle \right]\subseteq\left\{ H_{1},H_{2},\dots,H_{k+1}\right\} .
\]
Assume that $\left[\left\langle y\right\rangle \right]=\left\{ H_{1},H_{2},\dots,H_{k+1}\right\} $.
Then $x\in H_{1}\cup H_{2}\cup\cdots\cup H_{k+1}=\bigcup_{g\in\left\langle x\right\rangle }\left\langle g^{-1}yg\right\rangle $,
and therefore $x=g^{-1}y^{m}g$ for some $g\in\left\langle x\right\rangle ,m\in\bbZ$,
and thus $gxg^{-1}\in\left\langle y\right\rangle \cap\left\langle x\right\rangle =\left\{ 1\right\} $,
a contradiction. Thus, $p\leq k$. 

To prove that $p\ge k$, let $H_{i}=\left\langle y^{\left(x^{i}\right)}\right\rangle $
for $i=1,\ldots,p$, and let $H_{p+1}=\left\langle x\right\rangle \left\langle y^{\frac{n}{m}}\right\rangle $. 

We claim that all intersections $H_{i}\cap H_{j}$, $i\neq j$, are
contained in $\left\langle y^{\frac{n}{m}}\right\rangle $. First,
let $i,j\leq p$. As in the case $i=1,j=2$, we have that $H_{i}\cap H_{j}\trianglelefteq\bbZ_{p}\rtimes_{\varphi}\mathbb{Z}_{n}$,
and therefore $H_{i}\cap H_{j}\leq H_{i}^{g}$ for each $g\in\left\langle x\right\rangle $.
In particular, $H_{i}\cap H_{j}\leq H_{1}\cap H_{2}=\left\langle y\right\rangle \cap\left\langle y^{x}\right\rangle =y^{\frac{n}{m}}$.
Next, let $i<j=p+1$ . Assume that $g\in H_{i}\cap H_{p+1}$. Then
\[
^{x^{i}}g\in^{x^{i}}H_{i}\cap H_{p+1}=H_{1}\cap H_{p+1}.
\]
Write $^{x^{i}}g=x^{a}y^{\frac{nb}{m}}$. Then $x^{a}y^{\frac{nb}{b}}\in\left\langle y\right\rangle $,
and therefore $x^{a}\in\left\langle y\right\rangle \cap\left\langle x\right\rangle =\left\{ 1\right\} $.
Thus, $^{x^{i}}g=y^{\frac{nb}{m}}$. Since $y^{\frac{nb}{m}}$ is
in the center of $\bbZ_{p^{n}}\rtimes_{\varphi}\bbZ_{m}$, we have
that $g\in\left\langle y^{\frac{n}{m}}\right\rangle $. 
Thus,
\begin{eqnarray*}
\lefteqn{\left|H_{1}\setminus\left\langle y^{\frac{n}{m}}\right\rangle \cup H_{2}\setminus\left\langle y^{\frac{n}{m}}\right\rangle \cup\cdots\cup H_{p+1}\right|=}\\
& = & 
\left|H_{1}\setminus\left\langle y^{\frac{n}{m}}\right\rangle \right|+\left|H_{2}\setminus\left\langle y^{\frac{n}{m}}\right\rangle \right|+\ldots+\left|H_{p+1}\right| \geq\\
&\geq & \left(n-\frac{n}{m}\right)p+\frac{pn}{m}=np.
\end{eqnarray*}
As
\[
H_{1}\setminus\left\langle y^{\frac{n}{m}}\right\rangle \cup H_{2}\setminus\left\langle y^{\frac{n}{m}}\right\rangle \cup\cdots\cup H_{p+1}\sub H_{1}\cup H_{2}\cup\cdots\cup H_{p+1},
\]
we have that $\left|H_{1}\cup H_{2}\cup\cdots\cup H_{p+1}\right|\ge np$,
and therefore $H_{1}\cup H_{2}\cup\cdots\cup H_{p+1}=\bbZ_{p}\rtimes_{\varphi}\mathbb{Z}_{n}$.

Finally, if $\varphi$ is trivial, then $\bbZ_{p}\rtimes_{\varphi}\mathbb{Z}_{n}$
is abelian and Theorem \ref{thm:Nil} applies.\end{proof}
\blem
\label{lem:p-power} Let $n$ be a natural number and $p$ be a prime
number. If $\varphi$ is non-trivial, then $\chigen\left(\bbZ_{p^{n}}\rtimes_{\varphi}\bbZ_{m}\right)=p$.\elem
\begin{proof}
Write $\bbZ_{p}\rtimes_{\varphi}\mathbb{Z}_{n}=\left\langle x\right\rangle \rtimes\left\langle y\right\rangle $.
We first show $\left\langle x^{p}\right\rangle \le\phi\left(\bbZ_{p^{n}}\rtimes_{\varphi}\bbZ_{m}\right)$.
Let $H$ be maximal in $\bbZ_{p^{n}}\rtimes_{\varphi}\bbZ_{m}$ then
$H\left\langle x^{p}\right\rangle $ is a subgroup containing $H$.
Since $\left\langle x^{p}\right\rangle \trianglelefteq\bbZ_{p^{n}}\rtimes_{\varphi}\bbZ_{m}$,
$H\le H\left\langle x^{p}\right\rangle \le\bbZ_{p^{n}}\rtimes_{\varphi}\bbZ_{m}$.
Assume that $\left\langle x^{p}\right\rangle \nleq H$. Then $H\left\langle x^{p}\right\rangle =\bbZ_{p^{n}}\rtimes_{\varphi}\bbZ_{m}$,
and therefore $x=hx^{ap}$ for some $h\in H,a\in\bbZ$. Thus, $x^{1-ap}\in H$. 
Since $1-ap$ and $p^{n}$ are relatively prime, we have that $x\in H$,
a contradiction. Thus, $\left\langle x^{p}\right\rangle \leq H$. 

By Lemma \ref{lem:frattini}, 
\[
\chigen\left(\bbZ_{p^{n}}\rtimes_{\varphi}\bbZ_{m}\right)=\chigen\left(\bbZ_{p^{n}}\rtimes_{\varphi}\bbZ_{m}/\left\langle x^{p}\right\rangle \right)=\chigen\left(\bbZ_{p}\rtimes_{\varphi'}\bbZ_{m}\right)\in\left\{ p,\infty\right\} .
\]
Since $\bbZ_{p^{n}}\rtimes_{\varphi}\bbZ_{m}$ is non-abelian, and
therefore non-cyclic, $\chigen(\bbZ_{p^{n}}\rtimes_{\varphi}\bbZ_{m})<\infty$,
and therefore $\chigen\left(\bbZ_{p^{n}}\rtimes_{\varphi}\bbZ_{m}\right)=p$.
\end{proof}

\begin{cor}
$\chigen\left(\bbZ_{p^{n}}\rtimes_{\varphi}\bbZ_{m}\right)=\infty$
if $m,p$ are relatively prime and $\varphi$ is trivial. Otherwise,
$\chigen\left(\bbZ_{p^{n}}\rtimes_{\varphi}\bbZ_{m}\right)=p$.
\end{cor}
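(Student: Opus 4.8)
The plan is to reduce everything to results already established, via a case analysis on whether $\varphi$ is trivial and whether $p$ divides $m$. The three cases to treat are: ($\varphi$ trivial and $\gcd(m,p)=1$), ($\varphi$ nontrivial), and ($\varphi$ trivial with $p\mid m$); together these exhaust all possibilities and match the two clauses of the statement.

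First I would dispose of the infinite case. If $\varphi$ is trivial and $\gcd(m,p)=1$, then $\bbZ_{p^{n}}\rtimes_{\varphi}\bbZ_{m}$ is the direct product $\bbZ_{p^{n}}\x\bbZ_{m}$, and the coprimality of $p^{n}$ and $m$ lets the Chinese Remainder Theorem identify it with the cyclic group $\bbZ_{p^{n}m}$. Lemma \ref{lem:ge2}(2) then gives $\chigen=\infty$, which is the first clause.

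For the ``otherwise'' clause I would split on $\varphi$. When $\varphi$ is nontrivial, Lemma \ref{lem:p-power} applies verbatim---it carries no coprimality hypothesis---and yields $\chigen=p$ at once. The only genuinely separate case is $\varphi$ trivial with $p\mid m$. Here $G=\bbZ_{p^{n}}\x\bbZ_{m}$ is finite, abelian, hence nilpotent; writing $m=p^{a}m'$ with $a\ge1$ and $p\nmid m'$ and applying the Chinese Remainder Theorem to $\bbZ_{m}$ gives $G\cong\bbZ_{p^{n}}\x\bbZ_{p^{a}}\x\bbZ_{m'}$. Its $p$-primary part $\bbZ_{p^{n}}\x\bbZ_{p^{a}}$ has rank $2$, so $G$ is noncyclic and $\chigen(G)<\infty$ by Lemma \ref{lem:ge2}(3).

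The step I expect to require the most care is the conclusion in this last case. Theorem \ref{thm:Nil} tells us that $\chigen(G)$ equals the least prime $q$ for which $\bbZ_{q}\x\bbZ_{q}$ is a quotient of $G$, and I must check that this least prime is exactly $p$ even though $m'$ may have prime divisors smaller than $p$. Since $\bbZ_{m'}$ is cyclic, the $q$-primary part of $G$ is cyclic for every prime $q\neq p$, so $\bbZ_{q}\x\bbZ_{q}$ is never a quotient when $q\neq p$; as $\bbZ_{p}\x\bbZ_{p}$ is a quotient, the minimal such prime is $p$ and $\chigen(G)=p$, completing the proof.
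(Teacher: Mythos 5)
Your proof is correct and takes essentially the same route as the paper, whose entire proof is the two-word citation ``Lemmata \ref{thm:Nil} and \ref{lem:p-power}'': nontrivial $\varphi$ is dispatched by Lemma \ref{lem:p-power}, and trivial $\varphi$ by cyclicity in the coprime case together with Theorem \ref{thm:Nil} when $p\mid m$. You have simply made explicit the case analysis, the Chinese Remainder identifications, and the verification that $\bbZ_{q}\x\bbZ_{q}$ is a quotient only for $q=p$, all of which the paper leaves to the reader.
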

\begin{proof}
Lemmata \ref{thm:Nil} and \ref{lem:p-power}.\end{proof}
\bthm
\label{thm:semiprod}Let $G=\bbZ_{m}\rtimes_{\varphi}\bbZ_{n}$, and
write $\bbZ_{m}=\bbZ_{p_{1}^{d_{1}}}\times\bbZ_{p_{2}^{d_{2}}}\times\ldots\times\bbZ_{p_{r}^{d_{r}}}$
with $p_{1}<p_{2}<\cdots<p_{r}$. If
\[
\left\{ p_{i}\mid\left(m,n\right)\,:\, i=1\ldots r\right\} \cup\left\{ p_{i}\mid m\,:\,\text{ \ensuremath{\bbZ_{n}}\mbox{ acts on }\ensuremath{\bbZ_{p_{i}^{d_{i}}}}\mbox{ nontrivially}}\right\} =\varnothing,
\]
then $\chigen\left(G\right)=\infty$. Otherwise,
\[
\chigen\left(G\right)=\min\left\{ p_{i}\mid\left(m,n\right)\,:\, i=1,\ldots,r\right\} \cup\left\{ p_{i}\mid m\,:\,\text{ \ensuremath{\bbZ_{n}}\mbox{ acts on }\ensuremath{\bbZ_{p_{i}^{d_{i}}}}\mbox{ nontrivially}}\right\} .
\]
\ethm
\begin{proof}
By induction on $m$. If $m$ is prime, then Lemma \ref{lem:p-power}
applies. 

If $\bbZ_{n}$ acts trivialy on $\bbZ_{p_{1}^{d_{1}}}$and $p_{1}\nmid n$,
then $\bbZ_{p_{1}^{d_{1}}}\leq Z\left(G\right)$. Thus, 
\[
\left(\bbZ_{p_{1}^{d_{1}}}\times\bbZ_{p_{2}^{d_{2}}}\times\cdots\times\bbZ_{p_{r}^{d_{r}}}\right)\rtimes\bbZ_{n}=\bbZ_{p_{1}^{d_{1}}}\times\left(\left(\bbZ_{p_{2}^{d_{2}}}\times\cdots\times\bbZ_{p_{r}^{d_{r}}}\right)\rtimes\bbZ_{n}\right).
\]
By proposition \ref{prop:CxG}, since $p_{_{1}}^{d_{1}}$ and $mn/p_{_{1}}^{d_{1}}$
are relatively prime, 
\begin{eqnarray*}
\lefteqn{\chigen\left(\bbZ_{p_{1}^{d_{1}}}\times\left(\left(\bbZ_{p_{2}^{d_{2}}}\times\cdots\times\bbZ_{p_{r}^{d_{r}}}\right)
\rtimes_{\varphi}\bbZ_{n}\right)\right)=}\\
& = & \min\left\{ \chigen\left(\left(\bbZ_{p_{2}^{d_{2}}}\times\cdots\times\bbZ_{p_{r}^{d_{r}}}\right)\rtimes_{\varphi}\bbZ_{n}\right),\chigen\left(\bbZ_{p_{1}^{d_{1}}}\right)\right\} .
\end{eqnarray*}
By the induction hypothesis, we are done in this case. 

If $\bbZ_{n}$ acts nontrivialy on $\bbZ_{p_{1}^{d_{1}}}$or $p_{1}\mid n$,
we have that 
\[
\left(\bbZ_{p_{1}^{d_{1}}}\times\bbZ_{p_{2}^{d_{2}}}\times\cdots\times\bbZ_{p_{r}^{d_{r}}}\right)\rtimes_{\varphi}\bbZ_{n}/\left(\bbZ_{p_{2}^{d_{2}}}\times\cdots\times\bbZ_{p_{r}^{d_{r}}}\right)\cong\bbZ_{p_{1}^{d_{1}}}\rtimes_{\varphi}\bbZ_{n}.
\]
Since $\bbZ_{p_{1}^{d_{1}}}\rtimes_{\varphi}\bbZ_{n}$ is non-cyclic,
\[
\chigen\left(\left(\bbZ_{p_{1}^{d_{1}}}\times\bbZ_{p_{2}^{d_{2}}}\times\cdots\times\bbZ_{p_{r}^{d_{r}}}\right)\rtimes\bbZ_{n}\right)\leq\chigen\left(\bbZ_{p_{1}^{d_{1}}}\rtimes_{\varphi}\bbZ_{n}\right)=p_{1}.
\]
It remains to show that $\chigen\left(\bbZ_{m}\rtimes\bbZ_{n}\right)\geq p_{1}$.
Write $\bbZ_{m}\rtimes\bbZ_{n}=\left\langle x\right\rangle \rtimes\left\langle y\right\rangle $
let 
\[
H_{1}\cup H_{2}\cup\cdots\cup H_{k}=G
\]
 be irrudandent . If $x\in H_{1}\cap H_{2}\cap\cdots\cap H_{k}$ then
\[
\left\langle y\right\rangle \cong G/\left\langle x\right\rangle =H_{1}/\left\langle x\right\rangle \cup\cdots\cup H_{k}/\left\langle x\right\rangle 
\]
Thus, for some $i$, $H_{i}/\left\langle x\right\rangle =G/\left\langle x\right\rangle $.
Since $\left\langle x\right\rangle \leq H_{i}$, we have that $H_{i}=G$
. Assume that $x\notin H_{1}$. Take $y\in H_{1}\setminus\left(H_{2}\cup\cdots\cup H_{k}\right)$
then at least two of the elements $y,x,xy,x^{2}y,x^{3}y,\ldots,x^{p-1}y$
are contained in the same $H_{i}$. If $x,x^{j}y\in H_{i}$ then $x,y\in H_{i}$
but since $y\in H_{i}$, we have that $i=1$. But then $x\notin H_{i}$,
a contradiction. If $x^{j}y,x^{t}y\in H_{i}$ with $j<t$ we have
$x^{t-j}\in H_{i}$. Since $t-j<p_{1}$, $t-j$ and $n$ are relatively
prime. Thus, $x\in H_{i}$ and we obtain a contradiction as before.\end{proof}
\begin{cor}
\label{cor:Dn}$\chigen\left(D_{2n}\right)$ is the minimal prime
factor of $n$.\end{cor}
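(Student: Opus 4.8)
The plan is to recognize $D_{2n}$ as the semidirect product $\bbZ_{n}\rtimes_{\varphi}\bbZ_{2}$, where $\bbZ_{2}$ acts on $\bbZ_{n}$ by inversion $x\mapsto -x$, and then to read off the answer from Theorem \ref{thm:semiprod}. Here one must mind the notational clash: the theorem is stated for $G=\bbZ_{m}\rtimes_{\varphi}\bbZ_{n}$, so in applying it I take its ``$m$'' to be our $n$ (the order of the rotation subgroup) and its acting group to be $\bbZ_{2}$. Writing $n=p_{1}^{d_{1}}\cdots p_{r}^{d_{r}}$ with $p_{1}<\cdots<p_{r}$, the whole task then reduces to evaluating the two index sets of primes appearing in the theorem.

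For the first set I need $p_{i}\mid(n,2)$, which holds exactly when $p_{i}=2$ and $2\mid n$; so this set equals $\left\{2\right\}$ when $n$ is even and $\varnothing$ when $n$ is odd. For the second set I must decide when the inversion action, restricted to a Sylow factor $\bbZ_{p_{i}^{d_{i}}}$, is nontrivial. Inversion is the identity on $\bbZ_{p_{i}^{d_{i}}}$ precisely when $2x\equiv 0$ for all $x$, i.e.\ when $p_{i}^{d_{i}}\le 2$; since each factor has order at least $2$, this occurs only for the factor $\bbZ_{2}$ (that is, $p_{i}=2$, $d_{i}=1$). Hence the second set contains every odd prime dividing $n$, and contains $2$ exactly when $4\mid n$.

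Combining the two sets: every odd prime divisor of $n$ is contributed by the action term, and $2$ lies in the union if and only if $2\mid n$ (supplied by the $\gcd$ term whenever $n$ is even). Thus the union appearing in Theorem \ref{thm:semiprod} is \emph{precisely the set of all prime divisors of $n$}. The corollary now follows from the two cases of that theorem: if $n=1$ the union is empty and $\chigen(D_{2n})=\infty$, consistent with $D_{2}\cong\bbZ_{2}$ being cyclic; and if $n>1$, since $p_{1}<\cdots<p_{r}$, the minimum of all prime divisors of $n$ is $p_{1}$, the minimal prime factor of $n$.

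The one delicate point, and the only place where care is genuinely required, is the case $n\equiv 2\pmod 4$: there the inversion action on the $\bbZ_{2}$ factor is \emph{trivial}, so $2$ is not furnished to the union by the action term and enters only through the $\gcd$ term $p_{i}\mid(n,2)$. Checking this edge case is what guarantees that the answer is the smallest prime factor of $n$ even when $n$ is even but not divisible by $4$; everything else is a direct bookkeeping of primes feeding into Theorem \ref{thm:semiprod}.
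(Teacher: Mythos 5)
Your proof is correct and follows essentially the same route as the paper's: identify $D_{2n}\cong\bbZ_{n}\rtimes\bbZ_{2}$ with the inversion action and apply Theorem \ref{thm:semiprod}. The only difference is bookkeeping style — you compute the full union of the two prime sets (showing it equals all prime divisors of $n$, with the $n\equiv 2\pmod 4$ case handled by the $\gcd$ term), whereas the paper argues more briefly by cases ($n$ even via the $\gcd$ term; $n$ odd by checking inversion is nontrivial on the Sylow subgroup of the smallest prime), but both arguments are instances of the same approach.
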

\begin{proof}
$D_{2n}=\left\langle r,s\,:\, r^{n}=s^{2}=e,\, srs=r^{-1}\right\rangle \cong\bbZ_{n}\rtimes\bbZ_{2}$.
If $n$ is even, then $\chigen\left(\bbZ_{n}\rtimes\bbZ_{2}\right)=2$
by the preceding theorem. Otherwise, let $p$ be the minimal prime
factor of $n$, and let $\bbZ_{p^{i}}$ be the Sylow $p$-group of
$\bbZ_{n}$. Let $y\in\bbZ_{p^{i}}\setminus\left\{ e\right\} $ then
$sys^{-1}=y^{-1}\neq y$ . Thus, $\bbZ_{2}$ acts nontrivially on
$\bbZ_{p^{i}}$ and the result follows from the preceding theorem.\end{proof}
\bexm
\label{exm:Z3Z} $\chigen(\bbZ_{3}\rtimes\bbZ)=3$, but $\Zpp$ is
not a quotient of $\bbZ_{3}\rtimes\bbZ$ for any $p$.\eexm
\begin{proof}
As the automorphism group of $\bbZ_{3}$ has order $2$, the action
of $2\bbZ$ on $\bbZ_{3}$ by conjugation is trivial, that is, $2\bbZ$
is in the center of $\bbZ_{3}\rtimes\bbZ$. Now, $\bbZ_{3}\rtimes\bbZ/2\bbZ=\bbZ_{3}\rtimes\bbZ_{2}=D_{6}$.
By Lemma \ref{lem:quotient} and Corollary \ref{cor:Dn}, $\chigen(\bbZ_{3}\rtimes\bbZ)\le\chigen(D_{6})=3$.
By Theorem \ref{thm:twocolors}, it remains to prove that $\Zpp$
is not a quotient of $\bbZ_{3}\rtimes\bbZ$ for any $p$. 

Case 1: $p\neq3$. Assume that $\bbZ_{3}\rtimes\bbZ/H=\bbZ_{p}\x\bbZ_{p}$.
Write $\bbZ_{3}\rtimes\bbZ=\left\langle x\right\rangle \rtimes\left\langle y\right\rangle $.
As $x^{3}=e$, the order of $xH$ divides both $3$ and $p$, and
is therefore $1$. Thus, $x\in H$, that is, $\bbZ_{3}\le H$, and
by the Third Isomorphism Theorem, 
\[
\bbZ/(H/\bbZ_{3})=(\bbZ_{3}\rtimes\bbZ/\bbZ_{3})/(H/\bbZ_{3})=\bbZ_{3}\rtimes\bbZ/H=\Zpp
\]
is a quotient of a cyclic group, and thus cyclic, a contradiction. 

Case 2: $p=3$. Assume that $\bbZ_{3}\rtimes\bbZ/H=\bbZ_{3}\x\bbZ_{3}$.
Write $\bbZ_{3}\rtimes\bbZ=\left\langle x\right\rangle \rtimes\left\langle y\right\rangle $.
Then $y^{3}\in H$. Thus, $x^{2}y^{3}=xy^{3}x^{-1}\in H$, and therefore
$x^{2}\in H$. It follows that $\bbZ_{3}\le H$, and we get a contradiction
as in Case 1.
\end{proof}
We have proved that the generating chromatic number of a nilpotent
group is either $\infty$ or prime. One may wonder whether at least
this can be generalized, for solvable groups at least. This is not
the case.
\bexm
$\chigen(A_{4})=4$.
\eexm
\begin{proof}
$A_{4}$ has four maximal subgroups of order $3$ and one maximal
subgroup of order $4$. Together, these groups cover $A_{4}$. Thus,
$\chigen(A_{4})<5$. On the other hand, no four of these groups cover
$A_{4}$, since the identity elements belongs to them all, and thus
the cardinality of their union is at most $1+3(3-1)+(4-1)=10<12$.
\end{proof}
It can be shown that $\chigen(S_{4})=3$ by verifying that $3\le\chigen(S_{4})$
and using that $S_{3}=D_{6}$ is a quotient of $S_{4}$.

\section{Vector spaces and fields}
\bdfn
The \emph{generating chromatic number} of a vector space $V$, $\chigen(V)$,
is the maximum number of colors $k$ such that there is a monochromatic
spanning set for each coloring of the elements of $V$ in $k$ colors.
If no such maximal $k$ exists, we set $\chigen(G)=\infty$.
\edfn
The proofs of the following two lemmata are similar to those of Lemmata
\ref{lem:quotient} and \ref{lem:sub-coloring-equiv}.
\blem
\label{lem:quotient-1}If a vector space $U$ is a linear image of
a vector space $V$, then $\chigen(V)\le\chigen(U)$. \qed
\elem

\blem
\label{lem:sub-coloring-equiv-1} Let $V$ be a vector space . The
following are equivalent:
\begin{enumerate}
\item $\chigen\left(V\right)\geq k$.
\item For each cover $V=V_{1}\cup\cdots\cup V_{k}$ of $V$ by subspaces,
there is $i$ with $V_{i}=V$. \qed
\end{enumerate}
\elem
The proof similar to Lemma \ref{lem:Zpp} also establishes the following.
\blem
\label{lem:Zpp-1}Let $\bbF$ be a finite field. The vector space
$\bbF^{2}$ satisfies $\chigen(\bbF^{2})\leq\left|\bbF\right|$.
\elem
\begin{proof}
We find $\left|\bbF\right|+1$ proper subspaces that cover $\bbF^{2}$.
For each $\alpha\in\bbF$, let $V_{\alpha}=\text{ span }\left\{ \left(1,\alpha\right)\right\} $.
Let $V=\text{span }\left\{ \left(0,1\right)\right\} $. To see that
$\bbF^{2}=V\cup\bigcup_{\alpha\in\bbF}V_{\alpha}$, let $\left(\alpha,\beta\right)\in\bbF^{2}$.
If $\alpha=0$ then $\left(\alpha,\beta\right)\in V$, and if not,
then $\left(\alpha,\beta\right)\in\text{ span }\left\{ \left(1,\alpha^{-1}\beta\right)\right\} \in V_{\alpha^{-1}\beta}$.
\end{proof}

Clearly, for vector spaces $V$ of dimension $1$, $\chigen(V)=\infty$.

The inequality $(\ge)$ in the following theorem was proved in 
Bialynicki-Birula--Browkin--Schinzel \cite{BBS59}.
The other inequality was proved, e.g., in Khare \cite{Khare}.

\bthm[Bialynicki-Birula--Browkin--Schinzel, Khare]
\label{thm:Vec}Let $V$ be a vector space of dimension $\ge2$ over
a field $\bbF$. Then $\chigen\left(V\right)=\left|\bbF\right|$ if
$\bbF$ is finite, and $\infty$ otherwise.\ethm
\begin{proof}
$\left(\leq\right)$ By Lemmata \ref{lem:Zpp-1} and \ref{lem:quotient-1}.

$\left(\geq\right)$ Similar to the proof of Proposition \ref{prop:pLowerBound}.
\end{proof}

\bthm[Bialynicki-Birula--Browkin--Schinzel \cite{BBS59}]
Let $\bbF$ be a field. For each coloring of the elements of $\bbF$
in finitely many colors, there a monochromatic set generating $\bbF$
as a field.
\ethm

\begin{proof}
Assume that there is a finite coloring $c$ with no monochromatic
generating set, with a finite, minimal number of colors. For each
color $i$, let $\bbF_{i}$ be the subfield generated by the elements
of color $i$. Let $\bbH=\bigcap_{i}\bbF_{i}$.

Since, as groups, $\left(\bbF_{i},+\right)\leq\left(\bbF,+\right)$
for all colors $i$, we have by Neumann's Theorem \ref{thm:Neumann}
that $\left(\bbH,+\right)$ is of finite index in $\left(\bbF,+\right)$. 

If $\bbH$ is finite, then $\bbF$ is finite, and then the multiplicative
group $\bbF^{*}$ is cyclic. In particular, $\bbF$ is genrated, as
a field, by a single element, which in turn consitututes a monochromatic
generating set, a contradiction. 

If $\bbH$ is infinite, then as $\bbF$ and each $\bbF_{i}$ are vector
spaces over the infinite field $\bbH$, we have by Thorem \ref{thm:Vec}
that there is a monochromatic set generating $\bbF$ as a vector space
of $\bbH$, and, in particular, as a field, a contradiction.
\end{proof}

\section{Open problems}

Tomkinson \cite{Tomkinson97} provides a complete characterization of the generating chromatic number 
of arbitrary finite solvable groups. 

An interesting direction may be to carry out a finer analysis
of the case of \emph{infinite} monochromatic numbers. To this end,
define $\hat{\chi}_{\mathrm{gen}}(G)$ as the minimal cardinal number
of colors needed to color the elements of $G$ such that there are
no monochromatic generating sets. If this number is finite, then it
is just $\chigen(G)+1$, but in the infinite case, this is the right
definition. Some, but not all, of the proofs in this paper extend
to the infinite case. 

\ed